%
%
%
%
\documentclass{amsart}
\usepackage{amsmath,amssymb}

\newtheorem{theorem}{Theorem}[section]
\newtheorem{lemma}[theorem]{Lemma}

\theoremstyle{definition}
\newtheorem{example}[theorem]{Example}
\newtheorem{conjecture}[theorem]{Conjecture}

\theoremstyle{remark}
\newtheorem{remark}[theorem]{Remark}

\numberwithin{equation}{section}

\newcommand{\mdeg}{\mathrm{mdeg}}
\newcommand{\Aut}{\mathrm{Aut}}
\newcommand{\Tame}{\mathrm{Tame}}

\begin{document}

\title{Multidegrees of Tame automorphisms with one prime number}

\author{Jiantao Li}
\address{School of Mathematics, Jilin university, 130012, Changchun, China} \email{jtlimath@gmail.com}

\author{Xiankun Du}
\address{School of Mathematics, Jilin university, 130012, Changchun, China} \email{duxk@jlu.edu.cn}

\date{\today}
\subjclass[2010]{14R10} \keywords{multidegree, tame automorphism, elementary reduction, Poisson bracket.}
\thanks{This research was supported by
NSF of China (No.11071097, No.11101176) and ``211 Project" and ``985 Project" of Jilin University.}

\begin{abstract}
Let $3\leq d_1\leq d_2\leq d_3$ be integers.  We show the following
results: (1) If $d_2$ is a prime number and
$\frac{d_1}{\gcd(d_1,d_3)}\neq2$, then $(d_1,d_2,d_3)$ is a
multidegree of a tame automorphism if and only if $d_1=d_2$ or
$d_3\in d_1\mathbb{N}+d_2\mathbb{N}$; (2) If $d_3$ is a prime number
and $\gcd(d_1,d_2)=1$, then $(d_1,d_2,d_3)$ is a multidegree of
a tame automorphism  if and only if $d_3\in
d_1\mathbb{N}+d_2\mathbb{N}$. We also relate this investigation with
a conjecture of Drensky and Yu,  which concerns with the lower bound
of the degree of the Poisson bracket of two polynomials,  and we
give a counter-example to this conjecture.
\end{abstract}

\maketitle


\section{Introduction}

Throughout this paper, let $F=(F_1,\dots,F_n): k^n\rightarrow k^n$
be a polynomial map, where $k$ is a field of characteristic $0$.  Denote by $\Aut~k^n$ the group of all
polynomial automorphisms of $k^n$.
Denote by $\mdeg F:=(\deg F_1,\ldots,\deg F_n)$ the {\it
multidegree} of $F$ and by $\mdeg$ the mapping from
the set of all polynomial maps into the set $\mathbb{N}^n$, where
$\mathbb{N}$ denotes the set of all nonnegative integers.

A polynomial automorphism $F=(F_1,\ldots,F_n)$ of $k^n$ is called
{\it elementary} if $$F=(x_1,\ldots,x_{i-1}, \alpha
x_i+f(x_1,\ldots,x_{i-1},x_{i+1},\ldots,x_n), x_{i+1},\ldots,x_n)$$
for $\alpha\in k^*$. Denote by $\Tame~k^n$ the subgroup of
$\Aut~k^n$ that is generated by all elementary automorphisms. The
element in $\Tame~k^n$ is called {\it tame automorphism}.  The
classical Jung-van der Kulk theorem \cite{Jung, Kulk} shows that
every polynomial automorphism of $k^2$ is tame. For many years
people believe that $\Aut~k^n$ is equal to $\Tame~k^n$. However, in
2004, Shestakov and Umirbaev \cite{SU04P,SU04T} proved the famous
Nagata conjecture, that is, the Nagata automorphism on $k^3$ is not
tame.

The multidegree plays an important role in the description of
polynomial automorphisms. For example, the Jacobian conjecture is
equivalent to the assert that if $(F_1, F_2)$ is a polynomial map
satisfying the Jacobian condition, then $\mdeg F=(\deg F_1, \deg
F_2)$ is principal, that is, $\deg F_1\mid \deg F_2$ or $\deg
F_2\mid \deg F_1$ \cite{Abhyankar08a}. But it is difficult to
describe the multidegrees of polynomial maps in higher dimensions,
even in dimension three. Recently, Kara\'{s} present a series of
papers concerning with multidegrees of tame automorphisms in
dimension three, see \cite{Karas10a,Karas11c,Karas11a,KarasZ11}.

In \cite{Karas10a}, Kara\'{s} proposed the following conjecture.
\begin{conjecture}\cite[Conjecture 4.1]{Karas10a}\label{Conjecture of Karas}
Let $3\leq p_1\leq d_2\leq d_3$ be integers with $p_1$ a prime number. Then $(p_1,d_2,d_3)\in \mdeg (\Tame~k^3)$ if and only if $p_1\mid d_2$ or $d_3\in p_1\mathbb{N}+d_2\mathbb{N}$.
\end{conjecture}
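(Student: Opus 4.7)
The plan is to establish the equivalence by handling the two directions separately. For the ``if'' direction, in the case $d_3 = \alpha p_1 + \beta d_2$ with $\alpha, \beta \in \mathbb{N}$, I would take the composition $\psi_3 \circ \psi_2 \circ \psi_1$ of the elementary automorphisms $\psi_1 : x_1 \mapsto x_1 + x_3^{p_1}$, $\psi_2 : x_2 \mapsto x_2 + x_3^{d_2}$, and $\psi_3 : x_3 \mapsto x_3 + x_1^{\alpha} x_2^{\beta}$; the resulting multidegree is $(p_1, d_2, \alpha p_1 + \beta d_2)$ because the leading $x_3$-monomial of $(x_1 + x_3^{p_1})^{\alpha}(x_2 + x_3^{d_2})^{\beta}$ has coefficient $1$ and no competing monomial of the same total degree. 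In the other case $p_1 \mid d_2$, write $d_2 = m p_1$; then the cancellation $F_2 - F_1^{m}$ has degree $(m-1)p_1 + 1$, and by a nested Shestakov--Umirbaev-style construction (rearranging an initial pair of elementary steps so that the leading forms of $F_1, F_2$ sit in a different variable, and then applying a fourth elementary $x_3 \mapsto x_3 + g(F_1, F_2)$ with $g$ chosen so that the top-degree contributions cancel) one can realize every $d_3 \geq d_2$.

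For the ``only if'' direction I would argue by contradiction on a counterexample of minimal multidegree. Let $F = (F_1, F_2, F_3) \in \Tame\, k^3$ satisfy $\mdeg F = (p_1, d_2, d_3)$ with $p_1 \nmid d_2$ and $d_3 \notin p_1\mathbb{N} + d_2\mathbb{N}$, and minimize $p_1 + d_2 + d_3$ among such counterexamples. By the Shestakov--Umirbaev structure theorem, $F$ admits either an elementary reduction or one of the five non-elementary reductions. An elementary reduction of $F_3$ expresses the leading form of $F_3$ as a polynomial in the leading forms of $F_1, F_2$, immediately forcing $d_3 \in p_1\mathbb{N} + d_2\mathbb{N}$; an elementary reduction of $F_1$ or $F_2$ strictly lowers that coordinate's degree and produces a smaller counterexample (after verifying that the hypothesis is preserved), contradicting minimality. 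For each of the five non-elementary reductions one extracts a relation between $p_1, d_2, d_3$ from the associated degree inequality, and primality of $p_1$ forces $p_1 \mid d_2$, again a contradiction.

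The main obstacle is the case analysis for the five non-elementary reductions. Each of them gives a degree identity involving a Poisson bracket $\{F_i, F_j\}$, and the cleanest way to bound these would be the Drensky--Yu estimate on $\deg\{f, g\}$. Since the paper itself exhibits a counter-example to the Drensky--Yu bound, one must fall back on the unconditional but weaker Shestakov--Umirbaev inequality, which produces extra subcases. In each subcase, one tries to derive $p_1 \mid d_2$ from primality combined with divisibility relations among $p_1, d_2, d_3$; I expect this is where most of the technical work will concentrate, and an auxiliary hypothesis comparable to $d_1/\gcd(d_1, d_3) \neq 2$ that appears in the paper's main theorem may well be required to close the most delicate of the five cases.
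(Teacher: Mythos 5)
This statement is Conjecture~\ref{Conjecture of Karas}, an open conjecture of Kara\'{s} that the paper does \emph{not} prove: it only quotes the known partial results (Kara\'{s} under $\frac{d_3}{d_2}\neq\frac32$ or $d_2>2p_1-4$; Sun and Chen under conditions such as $\frac{d_2}{\gcd(d_2,d_3)}\neq2$) and then proves \emph{variants} with the prime in the second or third position, Theorems~\ref{main of p2} and~\ref{main of p3}, each requiring an auxiliary hypothesis. Your proposal does not close the conjecture either --- your final sentence concedes that ``an auxiliary hypothesis comparable to $d_1/\gcd(d_1,d_3)\neq2$ may well be required,'' which is precisely the admission that the delicate case (where the relevant ratio $p=\deg f/\gcd(\deg f,\deg g)$ equals $2$ and Theorem~\ref{inequality} is too weak, while the Drensky--Yu bound of Conjecture~\ref{Yu} is false by Example~\ref{counterexample}) remains open. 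A plan that reproduces the known partial results is not a proof of the statement.

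Two steps of your ``only if'' argument also fail as written. First, an elementary reduction of $F_3$ does not ``immediately'' force $d_3\in p_1\mathbb{N}+d_2\mathbb{N}$: from $\deg(F_3-g(F_1,F_2))<\deg F_3$ one only gets $\deg g(F_1,F_2)=d_3$, and cancellation among leading forms is possible; one must combine Theorem~\ref{inequality} with the bound $d_3<(p_1-1)(d_2-1)$ from Lemma~\ref{ineq} (using $\gcd(p_1,d_2)=1$, available only because $p_1\nmid d_2$) to force $q=0$ before reading off $d_3=p_1\deg g_i+id_2$. Second, your minimal-counterexample treatment of an elementary reduction of $F_1$ or $F_2$ yields nothing: the reduced map is automatically tame (it is $F$ composed with an elementary automorphism), and its multidegree need not violate the conjectured criterion --- the new smallest degree may be $\leq 2$, in which case every such triple is realizable --- so minimality produces no contradiction. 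The workable argument, as in the paper's Cases 1--3, is to show such reductions are \emph{impossible}: the degree inequality forces $q=r=0$, hence $g$ depends on a single coordinate and one gets $d_1\in d_2\mathbb{N}$ or $d_2\in d_1\mathbb{N}$, contradicting the size assumptions. Smaller issues: Shestakov--Umirbaev define four (not five) non-elementary reduction types, of which type IV is excluded by Kuroda and types I--II reduce to the elementary case by \cite[Proposition 20]{Karas11e}; and your construction for the ``if'' direction when $p_1\mid d_2$ (the claimed degree of $F_2-F_1^m$ and the ``nested'' step) is unsupported, though that direction is known and can simply be cited.
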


In \cite{Karas11e}, Kara\'{s} showed that if $\frac{d_3}{d_2}\neq\frac32$ or  $\frac{d_3}{d_2}=\frac32$ and $d_2>2p_1-4$, then Conjecture \ref{Conjecture of Karas} is valid. In \cite{SunC}, Sun and Chen also proved that if one of the following conditions is satisfied: (i) $\frac{d_2}{\gcd(d_2,d_3)}\neq2$; (ii) $\frac{d_3}{\gcd(d_2,d_3)}\neq3$; (iii) $d_2>2p_1-5$, then Conjecture \ref{Conjecture of Karas} is true.

In this paper, we consider a variation of the conjecture of Kara\'{s}. Let $3\leq d_1\leq d_2\leq d_3$ be integers. We show the following results: (1) If $d_2$ is a prime number and $\frac{d_1}{\gcd(d_1,d_3)}\neq2$, then $(d_1,d_2,d_3)\in
\mdeg(\Tame~k^3)$ if and only if $d_1=d_2$ or $d_3\in d_1\mathbb{N}+d_2\mathbb{N}$; (2) If $d_3$ is a prime number and $\gcd(d_1,d_2)=1$, then $(d_1,d_2,d_3)\in
\mdeg(\Tame~k^3)$ if and only if $d_3\in d_1\mathbb{N}+d_2\mathbb{N}$. We also relate this investigation to a conjecture of Drensky and Yu, which concerns with the lower bound of the degree of the Poisson bracket of two polynomials, and we give a counter-example of Drensky and Yu's conjecture.

\section{Preliminaries}

Recall that a pair $f, g \in \mathbb{C}[x_1,\ldots,x_n]$ is called $*$-{\it reduced} in \cite{SU04P,SU04T} if
\begin{enumerate}
\item $f, g$ are algebraically independent;
\item $\bar{f}, \bar{g}$ are algebraically dependent, where $\bar{f}$ denotes the highest homogeneous component of $f$;
\item $\bar{f}\notin \langle \bar{g} \rangle$ and $\bar{g}\notin \langle \bar{f} \rangle$.
\end{enumerate}

The following inequality plays an important role in the proof of the Nagata conjecture in \cite{SU04P,SU04T} and is also essential in our proofs.

\begin{theorem}\emph{(\cite[Theorem 3]{SU04P})}.\label{inequality}
Let $f, g \in k[x_1,\ldots,x_n]$ be a $*$-reduced pair, and $G(x,y)\in k[x,y]$ with $\deg_y G(x,y)=pq+r,\ 0\leq r<p$, where $p=\frac{\deg f}{\gcd(\deg f, \deg g)}$. Then
$$\deg G(f,g)\geq q(p\deg g-\deg f-\deg g+\deg [f,g])+r \deg g.$$
\end{theorem}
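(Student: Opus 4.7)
The plan is to prove the inequality by induction on $q$, exploiting the algebraic dependence of $\bar f$ and $\bar g$ through a leading-form relation, and tracking the degree defect via the Poisson bracket. Set $s=\deg g/\gcd(\deg f,\deg g)$, so $\gcd(p,s)=1$ and $p\deg g=s\deg f$. Since $\bar f$ and $\bar g$ are algebraically dependent homogeneous polynomials with coprime degree ratio, they must satisfy $\bar g^{\,p}=\lambda\bar f^{\,s}$ for some $\lambda\in k^{*}$. The preparatory step, and really the heart of the matter, is the key estimate
\[
\deg\bigl(g^{p}-\lambda f^{s}\bigr)\;\geq\;p\deg g-\deg f-\deg g+\deg[f,g].
\]
To prove it, set $h=g^{p}-\lambda f^{s}$ and use the Poisson-bracket identity $[f,h]=p\,g^{\,p-1}[f,g]$. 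Comparing with the bilinear expression for $[f,h]$, whose degree is bounded in terms of $\deg h$ and $\deg f$, yields the stated lower bound on $\deg h$. The $*$-reducedness is precisely what prevents an unwanted degeneracy in this comparison.

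For the induction on $q$, the base case $q=0$ asserts $\deg G(f,g)\geq r\deg g$ whenever $\deg_{y}G=r<p$. Writing $G=\sum_{j=0}^{r}h_{j}(x)\,y^{j}$, the top-degree piece $h_{r}(f)g^{r}$ cannot be cancelled by lower ones: any such cancellation would produce a non-trivial homogeneous relation among $\{\bar f^{\,i}\bar g^{\,j}:j\leq r<p\}$, but the minimal relation between $\bar f$ and $\bar g$ is $\bar g^{\,p}=\lambda\bar f^{\,s}$, which is inaccessible at these $y$-exponents. For the inductive step, decompose $G=G_{1}y^{p}+G_{2}$ with $\deg_{y}G_{1}=p(q-1)+r$ and $\deg_{y}G_{2}<p$, and substitute $g^{p}=\lambda f^{s}+h$ to rewrite
\[
G(f,g)\;=\;\bigl(\lambda f^{s}G_{1}(f,g)+G_{2}(f,g)\bigr)\;+\;h\cdot G_{1}(f,g).
\]
The parenthesized part is $\tilde G(f,g)$ for a polynomial $\tilde G$ with $\deg_{y}\tilde G\leq p(q-1)+r$, to which the induction hypothesis applies; the remainder $h\cdot G_{1}(f,g)$ is controlled by combining the induction hypothesis for $G_{1}$ with the key estimate on $\deg h$. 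Adding the two contributions then reproduces the $q$-th step, provided one verifies (again using $*$-reducedness and the minimality of $\bar g^{\,p}=\lambda\bar f^{\,s}$) that the two pieces cannot cancel at top degree.

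The main obstacle is the key estimate on $\deg(g^{p}-\lambda f^{s})$: this is where $\deg[f,g]$ enters, and the interaction of the Poisson-bracket identity with the chain-rule degree bound requires the full strength of the $*$-reduced hypothesis to avoid losing a constant in the exponent. A secondary technical point is the book-keeping in the induction step, where one must rule out cancellation between the two contributions of the decomposition above; the argument is conceptually routine but is the place where the proof is easiest to mishandle.
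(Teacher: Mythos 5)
A preliminary remark: the paper does not prove this statement at all --- it is quoted verbatim as Theorem 3 of \cite{SU04P} --- so there is no in-paper proof to compare against, and I am judging your sketch on its own terms. The parts you work out are correct and do match the published strategy. In particular your ``key estimate'' is exactly the right mechanism: from the minimal relation $\bar g^{\,p}=\lambda\bar f^{\,s}$ (which does follow, by the standard weighted-homogeneity argument, from algebraic dependence of the leading forms together with $\gcd(p,s)=1$) one gets $[f,h]=p\,g^{\,p-1}[f,g]$ for $h=g^{p}-\lambda f^{s}$, and combining $\deg[f,h]=(p-1)\deg g+\deg[f,g]$ with the elementary bound $\deg[u,v]\leq\deg u+\deg v$ yields $\deg h\geq p\deg g-\deg f-\deg g+\deg[f,g]$. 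The base case $q=0$ is also fine: since $\gcd(p,s)=1$, the degrees $\deg h_{j}\cdot\deg f+j\deg g$ are pairwise distinct for distinct $j<p$, so the terms $h_{j}(f)g^{j}$ cannot interfere and $\deg G(f,g)\geq r\deg g$.

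The genuine gap is the cancellation issue in the inductive step, which you correctly identify but then dismiss as ``conceptually routine''. It is not routine; it is the actual content of the theorem beyond the key estimate. In the decomposition $G(f,g)=\tilde G(f,g)+h\,G_{1}(f,g)$ the second summand does satisfy $\deg\bigl(h\,G_{1}(f,g)\bigr)\geq qN+r\deg g$ (where $N$ denotes the key lower bound), and if the two summands had different degrees you would be done using that piece alone. But nothing you say excludes $\deg\tilde G(f,g)=\deg\bigl(h\,G_{1}(f,g)\bigr)$ with cancelling leading forms, and the ``minimality of $\bar g^{\,p}=\lambda\bar f^{\,s}$'' does not settle this: the leading form of $\tilde G(f,g)=\lambda f^{s}G_{1}(f,g)+G_{2}(f,g)$ is itself the outcome of possible internal cancellations, so it need not lie in $k[\bar f,\bar g]$, and you cannot simply argue that one summand's leading form involves $\bar h$ while the other's does not. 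Controlling precisely this interference is what occupies most of the proof in \cite{SU04P} and of Kuroda's later reworking \cite{Kuroda10}; without it the induction does not close. A secondary, fixable slip: if cancellation is excluded you do not need the induction hypothesis for $\tilde G$ at all, but if you do invoke it, note that $\deg_{y}\tilde G$ may be $p(q-1)+r''$ with $r''\neq r$, so the bound it returns is not the one you want to add to.
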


Note that $[f,g]$ means the Poisson bracket of $f$ and $g$ defined by
$$[f,g]=\sum_{1\leq i<j\leq n}(\frac{\partial f}{\partial x_i}\frac{\partial g}{\partial x_j}-\frac{\partial f}{\partial x_j}\frac{\partial g}{\partial x_i})[x_i,x_j].$$
By definition, $\deg [x_i,x_j]=2$ for $i\neq j$ and $\deg 0=-\infty$,
$$\deg [f,g]=\max_{1\leq i<j\leq n}\deg \{(\frac{\partial f}{\partial x_i}\frac{\partial g}{\partial x_j}-\frac{\partial f}{\partial x_j}\frac{\partial g}{\partial x_i})[x_i,x_j]\}.$$
It is shown that $[f,g]=0$ if and only if $f,g$ are algebraically dependent. If $f, g$ are algebraically independent, then
$$\deg [f,g]=2+\max_{1\leq i<j\leq n}\deg(\frac{\partial f}{\partial x_i}\frac{\partial g}{\partial x_j}-\frac{\partial f}{\partial x_j}\frac{\partial g}{\partial x_i})\geq2.$$

\begin{remark}\label{inequality2}
It is easy to shown (see \cite{Karas11a} for example) that Theorem \ref{inequality} is true even if $f,g$ just satisfy:
(1) $f, g$ are algebraically independent; (2) $\bar{f}\notin \langle \bar{g} \rangle$ and $\bar{g}\notin \langle \bar{f} \rangle$.
\end{remark}
\begin{theorem}{\rm (\cite[Theorem 2]{SU04T})}.\label{SUtheorem}
Let $F=(F_1,F_2,F_3)$ be a tame automorphism of $k^3$. If $\deg F_1+\deg F_2+\deg F_3>3$,  then $F$ admits either an elementary reduction or a reduction of types \textup{I-IV} \emph{(}see \cite[Definitions 1-4]{SU04T}\emph{)}.
\end{theorem}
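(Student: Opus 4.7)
The plan is to set up a minimal counter-example and use the degree inequality (Theorem~\ref{inequality}) to constrain $F$ so tightly that it must match one of the four tabulated reduction types. Suppose $F=(F_1,F_2,F_3)$ is a tame automorphism with $\deg F_1+\deg F_2+\deg F_3>3$ admitting neither an elementary reduction nor a reduction of types I--IV, chosen so that $\deg F_1+\deg F_2+\deg F_3$ is minimal among such counter-examples. After reordering we may assume $\deg F_1\le\deg F_2\le\deg F_3$. Absence of elementary reductions means no $\bar F_i$ is a scalar multiple of a polynomial in the remaining two leading forms, so the leading forms can only interact through genuinely nonlinear relations.

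The first structural step is to locate a $*$-reduced pair among the $F_i$. Since $F$ is an automorphism the $F_i$ are algebraically independent, but the Jacobian being a nonzero constant forces $\bar F_1,\bar F_2,\bar F_3$ to be algebraically dependent; combined with the previous observation, some pair -- say $(F_1,F_2)$ -- must be $*$-reduced. Using that every coordinate $x_j$ lies in $k[F_1,F_2,F_3]$, I would then pick a coordinate whose expansion as a polynomial in $F_1,F_2,F_3$ is nontrivial in $F_3$ and rewrite it, after subtracting off monomials in $F_1,F_2$, as a polynomial $G(F_1,F_2,F_3)$ of controlled $F_3$-degree.

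Applying Theorem~\ref{inequality} to the $*$-reduced pair $(F_1,F_2)$ with this well-chosen polynomial produces a lower bound for the resulting degree in terms of $\deg F_1$, $\deg F_2$ and $\deg[F_1,F_2]$. Comparing this bound with the actual degree of the coordinate, and invoking the minimality of the counter-example, forces the inequality to be nearly sharp. Unpacking the near-equality case should give rigid constraints on the shape of $\bar F_1,\bar F_2,\bar F_3$ and on the ratios $\deg F_2/\deg F_1$ and $\deg F_3/\deg F_1$; each surviving configuration should then correspond to precisely one of the four reduction types defined in \cite[Definitions 1--4]{SU04T}, yielding the desired contradiction.

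The main obstacle is the equality-case analysis: turning the numerical near-equality in Theorem~\ref{inequality} into a precise algebraic description of the leading forms, and then reading off from that description which product of two or three elementary automorphisms must sit at the ``top'' of every tame factorization of $F$. This is the technical heart of the Shestakov--Umirbaev argument and is precisely where the four specific types emerge rather than an unstructured longer list; a satisfactory proof would have to keep very careful track of how the Poisson bracket $[F_1,F_2]$ enters the inequality and interacts with each monomial substitution, since it is the term $\deg[F_1,F_2]$ that ultimately discriminates between the four admissible configurations.
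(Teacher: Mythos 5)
This statement is not proved in the paper at all: it is quoted verbatim as \cite[Theorem 2]{SU04T}, and its proof occupies essentially the whole of Shestakov and Umirbaev's second paper (building on the Poisson-bracket estimates of the first). So there is no in-paper proof to compare against, and your proposal must be judged as a free-standing argument. As such it has genuine gaps. First, the step ``some pair, say $(F_1,F_2)$, must be $*$-reduced'' does not follow from what precedes it. Algebraic dependence of the three leading forms $\bar F_1,\bar F_2,\bar F_3$ does not force any two of them to be algebraically dependent (consider $x$, $y$, $xy$), and the absence of an elementary reduction is the statement that no $\bar F_i$ is the highest homogeneous component of some $g(F_j,F_k)$ --- because of possible cancellation among lower-order terms this is not the same as $\bar F_i\notin k[\bar F_j,\bar F_k]$, so your reformulation of that hypothesis is already inexact. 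Locating a usable $*$-reduced (or at least ``$*$-reduced up to the weakening in Remark~\ref{inequality2}'') pair is itself a substantial part of the Shestakov--Umirbaev analysis and cannot be dispatched in one sentence.

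Second, and more seriously, the entire content of the theorem --- that the failure of elementary reducibility confines $F$ to exactly the four tabulated configurations --- is deferred in your last paragraph as ``the main obstacle.'' Announcing that the near-equality case of Theorem~\ref{inequality} ``should give rigid constraints'' that ``should then correspond to precisely one of the four reduction types'' is a statement of the goal, not a proof of it. The actual argument does not run by taking a minimal counterexample over the degree sum of the triple; it analyzes a factorization of $F$ into elementary automorphisms, introduces a degree-based notion of reduction for such factorizations, and carries out a long case analysis on the possible ratios $\deg F_2/\deg F_1$ and the value of $\deg[F_i,F_j]$, with each surviving branch becoming one of Definitions 1--4. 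None of that case analysis appears in your sketch, so the proposal cannot be accepted as a proof; at best it is a plausible reading of where the difficulty lies.
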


\begin{remark}\label{reductionIV}
It is shown by Kuroda that there is no tame automorphism on $k[x,y,z]$ admitting reductions of type IV, see \cite[Theorem 7.1]{Kuroda10}.
\end{remark}

Recall that we say a polynomial
automorphism $F=(F_1,F_2,F_3)$ admits an {\it elementary reduction} if
there exists a polynomial $g\in k[x,y]$ and a permutation
$\sigma$ of the set $\{1,2,3\}$ such that $\deg
(F_{\sigma(1)}-g(F_{\sigma(2)},F_{\sigma(3)}))<\deg F_{\sigma(1)}$.

In this paper, we consider when $(d_1,d_2,d_3)$ is a multidegree of
a tame automorphism on $k^3$. Note that if $(F_1,F_2,F_3)$ with
multidegree $(d_1,d_2,d_3)$ is a tame automorphism, then after a
permutation $\sigma$, $(F_{\sigma(1)},F_{\sigma(2)},F_{\sigma(3)})$
is also a tame automorphism. It is also shown that if $d_1<3$, then
$(d_1,d_2,d_3)\in \mdeg(\Tame~k^3)$, see \cite[Corollary
3]{Karas11a}. Thus, without loss of generality, we can assume that
$3\leq d_1\leq d_2\leq d_3$.

\section{multidegree $(d_1,p_2,d_3)$ with $p_2$ a prime number}

In this section, let $3\leq d_1\leq p_2\leq d_3$ be integers with $p_2$ a prime number.
We start with some lemmas.

\begin{lemma}\cite{Brauer}\label{ineq}
If $a$, $b$ are positive integers that $\gcd(a,b)=1$, then $l\in a\mathbb{N}+b\mathbb{N}$ for all integers $l\geq (a-1)(b-1)$.
\end{lemma}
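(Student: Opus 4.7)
The plan is to deduce this from Bézout's identity together with a careful range analysis. Since $\gcd(a,b)=1$, there exist integers $u,v$ with $au+bv=1$, so multiplying by $l$ gives one integer representation
\[
l = a(lu) + b(lv).
\]
Starting from any integer representation $l=ax_0+by_0$, one can translate along the lattice of solutions by replacing $(x_0,y_0)$ with $(x_0+bt,\,y_0-at)$ for $t\in\mathbb{Z}$. So the first step is to choose $t$ such that the resulting $x$-coordinate $x:=x_0+bt$ lies in the canonical range $0\leq x\leq b-1$; this is possible and uniquely determined by the division algorithm.

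Once $x$ is pinned down in $\{0,1,\dots,b-1\}$, the task reduces to showing that the hypothesis $l\geq (a-1)(b-1)$ forces the companion coordinate $y:=(l-ax)/b$ to be nonnegative. I would estimate
\[
by \;=\; l-ax \;\geq\; (a-1)(b-1) - a(b-1) \;=\; -(b-1),
\]
so $by>-b$, i.e.\ $y>-1$, and since $y$ is an integer we conclude $y\geq 0$. This produces a representation $l=ax+by$ with $x,y\in\mathbb{N}$, as required.

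There is no significant obstacle: the whole argument is the standard Sylvester--Frobenius reasoning, and the only point requiring a moment of care is the inequality $l-ax\geq -(b-1)$, which uses precisely the maximal value $x=b-1$ together with the hypothesized threshold $(a-1)(b-1)$. Since the paper cites \cite{Brauer}, I would present this as a short self-contained verification rather than as a derivation of the Frobenius number itself, and would not bother to establish the complementary fact that $(a-1)(b-1)-1=ab-a-b$ fails to be representable.
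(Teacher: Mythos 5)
Your proof is correct. The paper itself gives no argument for this lemma --- it is stated with only a citation to Brauer --- so there is no in-text proof to compare against; your Bézout-plus-translation argument is the standard one, and the key inequality $l-ax\geq (a-1)(b-1)-a(b-1)=-(b-1)>-b$ is verified correctly, forcing the integer $y$ to be nonnegative. Your closing remark is also apt: the lemma only asserts the sufficiency of the threshold $(a-1)(b-1)$, so the non-representability of $ab-a-b$ is not needed.
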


\begin{lemma}\label{elementary reduction}
If $(d_1,p_2,d_3)\in \mdeg(\Tame~k^3)$, then there exists a tame automorphism with multidegree $(d_1,p_2,d_3)$ which admits an elementary reduction.
\end{lemma}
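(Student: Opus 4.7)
The plan is to invoke the Shestakov--Umirbaev structure theorem (Theorem \ref{SUtheorem}) applied to any tame automorphism $F = (F_1, F_2, F_3)$ realizing $\mdeg F = (d_1, p_2, d_3)$. Since $d_1 \geq 3$ we have $d_1 + p_2 + d_3 \geq 9 > 3$, so $F$ must admit either an elementary reduction or a reduction of type I, II, III, or IV. Reductions of type IV are excluded by Kuroda's result (Remark \ref{reductionIV}). The goal is therefore to rule out types I, II, III under the hypotheses $p_2$ prime and $3 \leq d_1 \leq p_2 \leq d_3$, which will force $F$ itself to admit an elementary reduction; this is strictly stronger than what the lemma asserts.

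To carry this out, I would recall from \cite{SU04T} the precise degree conditions defining each of the types I, II, III. In each such reduction, two of the three components form a $\ast$-reduced pair whose degrees lie in a rigid small-integer proportionality (schematically $2:(2m+1)$ for type I with $m \geq 1$, and the configurations $2:3$, $3:5$ for types II and III), and the third component must satisfy an explicit inequality linking it to that pair. I would then carry out a case analysis on which pair of $\{d_1, p_2, d_3\}$ plays the role of the $\ast$-reduced pair. If $p_2$ belongs to this pair, then writing $p_2 = as$ with $a \in \{2, 3, 5\}$ and $s$ a positive integer forces, by primality and $p_2 \geq 3$, the equality $s = 1$ and $a = p_2$; the companion degree then equals a fixed small integer (most often $2s = 2$, which already contradicts $d_1 \geq 3$), and the handful of remaining configurations can be eliminated using Theorem \ref{inequality} directly. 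If instead $p_2$ is the unpaired third degree, the explicit SU inequality controlling this third component combined with $p_2 \leq d_3$ and the proportionality of $(d_1, d_3)$ yields a contradiction.

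The main obstacle will be the delicate bookkeeping: there are three possible placements of the prime $p_2$ among the components and three reduction types, giving several configurations to dispatch. Most are eliminated immediately by primality together with $d_1 \geq 3$, but the subcases involving the $3:5$ ratio of type III (where primality does not instantly pin down the scale parameter $s$) and those where $p_2$ is the unpaired component require a careful application of the inequality of Theorem \ref{inequality} (and its extension in Remark \ref{inequality2}) to rule out the offending multidegrees. Once every configuration has been excluded, we conclude that every tame automorphism with multidegree $(d_1, p_2, d_3)$ must in fact admit an elementary reduction, completing the proof.
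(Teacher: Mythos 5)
Your strategy has a fatal flaw: you aim to rule out reductions of types I and II outright, so that $F$ itself admits an elementary reduction, but that stronger statement is false under the lemma's hypotheses. The paper's own Example~\ref{example} (built on Kuroda's construction in \cite{Kuroda09}) exhibits a tame automorphism of multidegree $(10,23,25)$ admitting a reduction of type I; here $d_1=10$, $p_2=23$ is prime, and $3\leq 10\leq 23\leq 25$, so this configuration falls squarely within the lemma's scope. No amount of bookkeeping with primality and the inequality of Theorem~\ref{inequality} can eliminate type I (or II) reductions, because they genuinely occur. This is exactly why the lemma is phrased existentially --- ``there exists a tame automorphism with multidegree $(d_1,p_2,d_3)$ which admits an elementary reduction'' --- rather than asserting that every such $F$ does.

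The correct route, which is the paper's, is asymmetric in the reduction types. Only type III is excluded directly, and this is much easier than your $3:5$-ratio analysis suggests: the degree conditions of \cite[Definition~3]{SU04T} force the middle degree to equal $2n$ for some integer $n$, so $p_2$ would be an even prime $\geq 3$, which is impossible. For types I and II one instead invokes \cite[Proposition~20]{Karas11e} (ultimately resting on \cite[Proposition~1]{SU04T}): if a tame automorphism admits a reduction of type I or II, then there is another tame automorphism with the \emph{same multidegree} that admits an elementary reduction --- concretely, one precomposes with a suitable affine/elementary map, as in the passage from $F'$ to $F$ in Example~\ref{counterexample}. You need to replace your elimination of types I and II by this replacement argument; as written, the proposal cannot be completed.
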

\begin{proof}
Let $F$ be a tame automorphism with $\mdeg~F=(d_1,p_2,d_3)$. By Theorem \ref{SUtheorem} and Remark \ref{reductionIV}, $F$ admits an elementary reduction or a reduction of types I-III.

If $F$  admits a reduction of type III, then after a permutation, by \cite[Definition 3]{SU04T} there exists $n\in \mathbb{N}$ such that
\begin{align*}
&(3.1)\quad n<d_1\leq\frac32n,\ p_2=2n,\ d_3=3n;\quad \text{or}\\
&(3.2)\quad d_1=\frac32n,\ p_2=2n,\
\frac{5n}{2}<d_3\leq3n.
\end{align*}
Since $p_2$ is a prime number greater than $3$, (3.1) and (3.2) can not be satisfied. Thus, $F$ admits no reduction of type III.

By the definitions of reductions of types I and II, or see \cite[Proposition 20]{Karas11e}, if $F$ admits a reduction of type I or II, then there exists a tame automorphism admitting an elementary reduction with the same multidegree.
\end{proof}

We are now in a position to show our main result in this section.

\begin{theorem}\label{main of p2}
Let $3\leq d_1\leq p_2\leq d_3$ be integers with $p_2$ a prime number. If $\frac{d_1}{\gcd(d_1,d_3)}\neq2$, then $(d_1,p_2,d_3)\in
\mdeg(\Tame~k^3)$ if and only if $d_1=p_2$ or $d_3\in d_1\mathbb{N}+p_2\mathbb{N}$.
\end{theorem}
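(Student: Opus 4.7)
For the sufficiency ($\Leftarrow$), I will produce explicit tame automorphisms. If $d_1 = p_2$, I start with a $2$-dimensional tame automorphism of multidegree $(d_1, d_1)$ (built from two elementary moves), embed it as the first two components in $k^3$, and post-compose with $(x,y,z) \mapsto (x, y, z + g(x,y))$ choosing $g$ so that the pullback has degree exactly $d_3$; varying $g$ attains every $d_3 \ge d_1$. If $d_3 = \alpha d_1 + \beta p_2$ with $\alpha, \beta \ge 0$, I use the standard three-variable construction in the spirit of Kara\'{s} in which intermediate elementary maps involving $z$ inflate the first two components to degrees $d_1$ and $p_2$, and then the monomial $F_1^\alpha F_2^\beta$ is added to the third coordinate.

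For necessity ($\Rightarrow$), Lemma~\ref{elementary reduction} supplies a tame $F = (F_1, F_2, F_3)$ with $\mdeg F = (d_1, p_2, d_3)$ admitting an elementary reduction. I split into three cases according to which coordinate is reduced by a polynomial in the other two; each case further bifurcates by whether the leading forms of the two retained coordinates are algebraically independent.

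If those retained leading forms are algebraically \emph{independent}, no cancellation occurs in the top degree of $g(F_i, F_j)$, so the degree of the reduced coordinate equals a non-negative integer combination $\alpha \deg F_i + \beta \deg F_j$. The subcase reducing $F_3$ by $g(F_1, F_2)$ gives directly $d_3 \in d_1\mathbb{N} + p_2\mathbb{N}$; the subcases reducing $F_1$ or $F_2$ collapse, using $p_2$ prime together with $d_1 \le p_2 \le d_3$, to $d_1 = p_2$ or are impossible. If the retained leading forms are algebraically \emph{dependent}, I first verify that $(F_i, F_j)$ is in fact $*$-reduced: coprimality of $\deg F_i$ and $\deg F_j$ (forced by $p_2$ prime in the relevant subcases) combined with unique factorization in $k[x_1, x_2, x_3]$ implies $\bar F_i$ and $\bar F_j$ are proportional to powers of a common linear form, from which conditions (2) and (3) of the $*$-reduced definition are immediate. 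Theorem~\ref{inequality} then supplies a sharp lower bound on $\deg g(F_i, F_j)$ involving $\deg[F_i, F_j] \ge 2$ and $p = \deg F_i / \gcd(\deg F_i, \deg F_j)$, which I compare with the known value ($d_3$, $p_2$, or $d_1$) of that degree to extract the desired conclusion or a contradiction.

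I expect the main obstacle to be the dependent subcase when the reduced coordinate is $F_1$ or $F_2$: here the left side of Theorem~\ref{inequality} is small ($p_2$ or $d_1$), so the estimate must be delicately balanced against the lower bound on $\deg[F_i, F_j]$. The borderline where the Shestakov--Umirbaev inequality is too weak to close the argument is exactly $p = \deg F_i / \gcd(\deg F_i, \deg F_j) = 2$, and this is precisely the case excluded by the hypothesis $d_1/\gcd(d_1, d_3) \ne 2$. Eliminating this borderline reduces every case to one of the two advertised outcomes $d_1 = p_2$ or $d_3 \in d_1\mathbb{N} + p_2\mathbb{N}$.
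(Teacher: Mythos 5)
Your plan for the necessity direction is essentially the paper's own proof: the same Lemma~\ref{elementary reduction}, the same three cases by which coordinate is elementarily reduced, the same use of Theorem~\ref{inequality} with $\deg[F_i,F_j]\ge 2$, and the hypothesis $d_1/\gcd(d_1,d_3)\ne 2$ deployed in exactly the same place (the reduction of $F_2$ by $g(F_1,F_3)$, to force $p\ge 3$). Your bifurcation into ``leading forms independent / dependent'' is only a cosmetic difference: the paper handles both at once via Remark~\ref{inequality2}, which lets it apply the degree estimate as soon as condition (3) of the $*$-reduced definition holds, checking that condition by a degree-divisibility argument. For sufficiency the paper simply cites \cite[Proposition 2.2]{Karas11a} rather than rebuilding the explicit automorphisms; your constructions are the standard ones and are fine.

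One step of your dependent subcase is misstated and needs repair. In the case where $F_2$ is reduced by $g(F_1,F_3)$, the retained pair has degrees $d_1$ and $d_3$, and nothing forces $\gcd(d_1,d_3)=1$ --- indeed the theorem's hypothesis explicitly allows $\gcd(d_1,d_3)>1$ as long as $d_1/\gcd(d_1,d_3)\ne 2$. So your claim that ``coprimality \ldots forced by $p_2$ prime'' yields a common \emph{linear} form is false there; the correct statement is that algebraically dependent leading forms are powers $c_1w^{k}$, $c_2w^{l}$ of a common homogeneous form $w$ with $\deg w\mid\gcd(d_1,d_3)$, and condition (3) (here $\langle\,\cdot\,\rangle$ must be read as the generated subalgebra, so that membership amounts to $l\mid k$, i.e.\ $\deg F_j\mid\deg F_i$) follows not from coprimality but from $d_1\nmid d_3$ (a consequence of $d_3\notin d_1\mathbb{N}+p_2\mathbb{N}$) and $d_3\nmid d_1$. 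This is exactly the divisibility check the paper performs. The gap is local and fixable, but as written your justification of $*$-reducedness does not cover the one case where the theorem's extra hypothesis actually matters.
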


\begin{proof}
(1) If $d_1=p_2$ or $d_3\in d_1\mathbb{N}+p_2\mathbb{N}$,  then by \cite[Proposition 2.2]{Karas11a}, $(d_1,p_2,d_3)\in \mdeg(\Tame~k^3)$.

(2) Now suppose that $d_1<p_2$ and $d_3\notin d_1\mathbb{N}+p_2\mathbb{N}$. Moreover, by Lemma \ref{ineq}, $d_3<(d_1-1)(p_2-1)$. If there exists a tame automorphism with multidegree $(d_1,p_2,d_3)$, then by Lemma \ref{elementary reduction}, there exists a tame automorphism $F=(F_1,F_2,F_3)$  with $\mdeg~F=(d_1,p_2,d_3)$ admitting an elementary reduction.
Now the proof proceeds into three cases.

\textbf{Case 1:} If $F$ admits an elementary reduction of the form
$(F_1,F_2,F_3-g(F_1,F_2))$ such that $\deg(F_3-g(F_1,F_2))<\deg F_3$, then $\deg F_3=\deg g(F_1,F_2)$. Since $F$ is a polynomial automorphism, it follows that $F_i, F_j$ $(i,j=1,2,3)$ are algebraically independent, and hence $\deg[F_i,F_j]\geq 2$. Moreover, $\bar{F_i}\notin \langle \bar{F_j} \rangle$ since otherwise we have $\deg F_i \mid\deg F_j$, which contradicts to the fact that $d_1\nmid p_2$ and $d_3\notin d_1\mathbb{N}+p_2\mathbb{N}$. Note that $p=\frac{\deg F_1}{\gcd(\deg F_1,\deg F_2)}=d_1$. Set $\deg_y g(x,y)=d_1q+r,\ 0\leq r<d_1$. By Theorem \ref{inequality} and Remark \ref{inequality2},
\begin{align*}
d_3&=\deg F_3=\deg g(F_1,F_2)\\
   &\geq q(d_1p_2-d_1-p_2+\deg[F_1,F_2])+rp_2\\
   &\geq q(d_1p_2-d_1-p_2+2)+rp_2.
\end{align*}
Since $d_3<(d_1-1)(p_2-1)$, we have $q=0$.
Note that $0\leq r<d_1$, we can suppose that $g(x,y)=g_0(x)+g_1(x)y+\cdots+g_{d_1-1}(x)y^{d_1-1}$.
It follows from $\gcd(d_1,p_2)=1$ that the sets $d_1\mathbb{N},~ d_1\mathbb{N}+p_2,~\dots,~d_1\mathbb{N}+(d_1-1)p_2$ are disjoint. Thus,
\begin{align*}
d_3&=\deg g(F_1,F_2)=\deg(g_0(F_1)+g_1(F_1)F_2+\cdots+g_{d_1-1}(F_1)F_2^{d_1-1})\\
   &=\max_{0\leq i\leq d_1-1}\{\deg F_1\deg g_i+i\deg F_2\}=\max_{0\leq i\leq d_1-1}\{d_1\deg g_i+ip_2\},
\end{align*}
which contradicts $d_3\notin d_1\mathbb{N}+p_2\mathbb{N}$.

\textbf{Case 2:} If $F$ admits an elementary reduction of the form
$(F_1-g(F_2,F_3),F_2,F_3)$, then $\deg F_1=\deg g(F_2,F_3)$. $p=\frac{\deg F_2}{\gcd(\deg F_2,\deg F_3)}=p_2$. Set $\deg_y g(x,y)=p_2q+r,\ 0\leq r<p_2$. Then
\begin{align*}
d_1&=\deg F_1=\deg g(F_2,F_3)\\
   &\geq q(p_2d_3-p_2-d_3+\deg[F_2,F_3])+rd_3\\
   &\geq q(3d_3-p_2-d_3+2)+rd_3\\
   &\geq q((d_3-p_2)+d_3+2)+rd_3.
   \end{align*}
Since $d_1<(d_3-p_2)+d_3+2$ and $d_1<d_3$, it follows that $q=r=0$.
Suppose that $g(F_2,F_3)=g_1(F_2)$. Then $d_1=\deg F_1=\deg g_1(F_2)\in
p_2\mathbb{N}$, contrary to $d_1<p_2$.

\textbf{Case 3:} If $F$ admits an elementary reduction of the form
$(F_1,F_2-g(F_1,F_3),F_3)$, then $\deg F_2=\deg g(F_1,F_3)$.
It follows from $d_3\notin d_1\mathbb{N}+p_2\mathbb{N}$ that $\gcd(d_1,d_3)\neq
d_1$, whence $p=\frac{d_1}{\gcd(d_1,d_3)}\geq2$. Moreover, since $\frac{d_1}{\gcd(d_1,d_3)}\neq2$,  $p\geq3$. Let $\deg_y
g(x,y)=pq+r,\ 0\leq r<p$. Then
\begin{align*}
     p_2&=\deg F_2=\deg g(F_1,F_3)\\
        &\geq q(pd_3-d_1-d_3+\deg[F_1,F_3])+rd_3\\
        &\geq q(3d_3-d_1-d_3+2)+rd_3\\
        &=q((d_3-d_1)+d_3+2)+rd_3.
    \end{align*}
Thus, $q=r=0$.
Suppose that $g(F_1,F_3)=g_1(F_1)$. Then $p_2=\deg F_2=\deg g_1(F_1)\in
d_1\mathbb{N}$, a contradiction.

Therefore, $F$ can not admit any elementary reduction, the contradiction implies that there exists no tame automorphism with multidegree $(d_1,p_2,d_3)$ if $d_1<p_2$ and $d_3\notin d_1\mathbb{N}+p_2\mathbb{N}$.
\end{proof}

We claim that the condition $\frac{d_1}{\gcd(d_1,d_3)}\neq2$ in Theorem \ref{main of p2}  can not be removed. Indeed, Kuroda construct some tame automorphisms, after a permutation, with multidegree $(2m,2pm+p+1,(2p+1)m)$ admitting reductions of type I in \cite{Kuroda09}. Particularly, let $p=2$, $m=5$ or $11$. Then
\begin{example}\label{example}
There exist tame automorphisms with multidegree $(10,23,25)$ and $(22,47,55)$. Moreover, using the method in \cite{Kuroda09}, we can get a tame automorphism $F=(f_1,f_2,f_3)$ with $\mdeg~F=(10,23,25)$ admitting reductions of type I, where
$$\left\{
  \begin{array}{ll}
    f_1=x+y^2-g^2, \\
    f_2=\frac{256}{25}f_1^5+g+h^2, \\
    f_3=f_2+h,
  \end{array}
\right.
$$
$g=z+3x^2y+3xy^3+y^5$ and $h=y-6(x+y^2)^2g+8(x+y^2)g^3-\frac{16}5g^5$.
\end{example}

In the proof of Theorem \ref{main of p2}, we observe that if a more precise lower bound of $\deg[F_1,F_3]$ is given, then we can give a  better description of $\mdeg(\Tame~k^3)$.  This is closely related to a conjecture of Drensky and Jie-Tai Yu.
\begin{conjecture}\cite{DrenskyY09}\label{Yu}
Let $f$ and $g$ be algebraically independent polynomials in $k[x_1,\ldots,x_n]$ such that the homogeneous components of maximal degree of $f$ and $g$ are algebraically
dependent, $f$ and $g$ generate their integral closures $C(f)$ and $C(g)$ in $k[x_1,\ldots,x_n]$, respectively, and neither $\deg f | \deg g$ nor $\deg g | \deg f$. Then
$$\deg[f,g]>\min\{\deg(f), \deg(g)\}.$$
\end{conjecture}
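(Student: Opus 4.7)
The plan is to combine the Shestakov--Umirbaev $*$-reduced framework with a direct analysis of the Poisson bracket, using the integral closure hypotheses to force a non-trivial lower-order structure on $f$ and $g$. Assume without loss of generality $\deg f = m \leq n = \deg g$, so that $\min\{\deg f, \deg g\} = m$; the hypothesis $\deg f \nmid \deg g$ forces $m < n$. Because $\bar f, \bar g$ are homogeneous and algebraically dependent in the unique factorization domain $k[x_1, \ldots, x_n]$, there exist a homogeneous $h$ of degree $s = \gcd(m, n)$ and constants $\alpha, \beta \in k^*$ with $\bar f = \alpha h^p$ and $\bar g = \beta h^q$, where $p = m/s$, $q = n/s$, and (after replacing $h$ by a suitable power) $\gcd(p, q) = 1$. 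The hypotheses $\deg f \nmid \deg g$ and $\deg g \nmid \deg f$ then force $p, q \geq 2$.

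A preliminary verification shows that $(f, g)$ is $*$-reduced: conditions (1) and (2) are immediate, while condition (3) reduces to $q \nmid p$ and $p \nmid q$, both of which hold since $\gcd(p, q) = 1$ and $p, q \geq 2$. This makes Theorem~\ref{inequality} and its refinements available as tools.

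I next exploit the integral closure hypothesis to obtain a structural decomposition. I claim $f \notin k[h]$: if $f = P(h)$ for some $P \in k[t]$, then $h$ satisfies the monic relation $\alpha^{-1}(P(X) - f) = 0$ over $k[f]$ (after normalizing the leading coefficient), so $h \in C(f) = k[f]$; writing $h = R(f)$ forces $s = m\deg R$, impossible since $0 < s < m$. Hence I may decompose $f = Q_f(h) + \tilde f$ with $Q_f \in k[t]$, $\deg Q_f(h) \leq m$, and $\tilde f \notin k[h]$, $\tilde f \neq 0$; the same conclusion yields $g = Q_g(h) + \tilde g$ with $\tilde g \notin k[h]$.

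The final step is to expand the Poisson bracket using the Leibniz rule together with the vanishing $[h^a, h^b] = 0$ to obtain
\[
[f, g] = [Q_f(h), \tilde g] + [\tilde f, Q_g(h)] + [\tilde f, \tilde g],
\]
and to show that at least one of the first two summands dominates and attains a degree strictly greater than $m$. The third term is of lower order; the first is bounded by $m + \deg \tilde g$ and the second by $n + \deg \tilde f$. The crux is to show that the leading Jacobians $\partial_i Q_f(h)\,\partial_j \tilde g - \partial_j Q_f(h)\,\partial_i \tilde g$ (and their $(\tilde f, Q_g(h))$-analogue) do not all vanish to top order, so that the bounds are attained. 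The main obstacle I anticipate is precisely this cancellation analysis: the leading forms of the summands may coincide in degree, and one must argue that any conspiratorial cancellation would force an algebraic relation pushing $\tilde f$ or $\tilde g$ back into $k[h]$, contradicting the integral closure hypothesis. I expect this cancellation analysis to be the technical crux of the proof, and the point at which either the argument closes or a genuine obstruction to the conjecture emerges.
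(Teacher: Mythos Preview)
There is a fundamental problem: the statement you are attempting to prove is \emph{false}, and the paper cites it precisely in order to exhibit a counter-example, not to prove it. In Example~\ref{counterexample} the paper constructs a tame automorphism $F=(f_1,f_2,f_3)$ of $k^3$ with $\mdeg F=(10,23,25)$ and verifies by direct computation that $\deg[f_1,f_3]=8<10=\min\{\deg f_1,\deg f_3\}$. Since $f_1,f_3$ are coordinates of an automorphism they are algebraically independent and irreducible (hence each generates its own integral closure in $k[x,y,z]$); neither of $10,25$ divides the other; and $\deg[f_1,f_3]<\deg f_1+\deg f_3$ forces $\bar f_1,\bar f_3$ to be algebraically dependent. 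Every hypothesis of Conjecture~\ref{Yu} is satisfied, yet the conclusion fails. (Indeed, the paper notes that counter-examples had already appeared in \cite{DrenskyY09} itself.)

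Your outline already flags the cancellation analysis as the place where ``either the argument closes or a genuine obstruction to the conjecture emerges''; the obstruction is genuine. In the counter-example the leading forms are $\bar f_1=-y^{10}$ and $\bar f_3=-\tfrac{16}{5}y^{25}$, so in your notation $s=5$, $p=2$, $q=5$. Both $f_1$ and $f_3$ are built from the auxiliary element $g=z+3x^2y+3xy^3+y^5$ (not merely from the homogeneous $h=y^5$), and this shared dependence on $g$ produces cancellation in the Jacobians driving $\deg[f_1,f_3]$ all the way down to $8$, \emph{without} forcing the remainders $\tilde f_1,\tilde f_3$ into $k[h]$. The dichotomy you are relying on --- either the top-order brackets survive, or the remainders collapse into $k[h]$ --- is therefore not valid, and no refinement of this cancellation strategy can succeed.
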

Although some counter-examples of  Conjecture \ref{Yu} are given  in \cite{DrenskyY09}, it is still of great interest to find a meaningful
lower bound of $\deg[f,g]$, and such a bound will give a nice description of $\Tame~k^n$ and $\Aut~k^n$.  We observe that, from Example \ref{example}, we can construct some counter-examples of Conjecture \ref{Yu}.

\begin{example}\label{counterexample}
$F=(f_1,f_2,f_3)=(x+y^2-g^2,\frac{256}{25}f_1^5+g+h^2,h)$ is a tame automorphism admitting an elementary reduction, where $g=z+3x^2y+3xy^3+y^5$ and $h=y-6(x+y^2)^2g+8(x+y^2)g^3-\frac{16}5g^5$.
Moreover, $\mdeg~F=(10,23,25)$, $\deg [f_1,f_3]=8<\min \{\deg f_1,\deg f_3\}$. Thus, $(f_1,f_3)$ is a counter-example of Conjecture \ref{Yu}.
\end{example}
\begin{proof}
It follows from Example \ref{example} that $F'=(x+y^2-g^2,\frac{256}{25}f_1^5+g+h^2,f_2+h)$ is a tame automorphism with $\mdeg~F'=(10,23,25)$ admitting an reduction of type I. By~\cite[Proposition 1]{SU04T}, after composing an affine automorphism $(x,y,z-y)$, $F=(f_1,f_2,f_3)=(x+y^2-g^2, \frac{256}{25}f_1^5+g+h^2, h)$ is a tame automorphism with $\mdeg~F=(10,23,25)$ admitting an elementary reduction. Moreover,
\begin{multline*}
  \deg [f_1,f_3]= (-30x^2y^4-54x^3y^2-18x^4-6y^3z-12xyz+1)[x,y]\\
   -(6y^4+12xy^2+6x^2)[x,z]+(-10y^5-18xy^3-6x^2y+2z)[y,z].
\end{multline*}
Thus, $\deg [f_1,f_3]=8<\min \{\deg f_1,\deg f_3\}$, whence the homogeneous components of maximal degree of $f_1$ and $f_3$ are algebraically
dependent. Furthermore,
since $(f_1,f_2,f_3)$ is a polynomial automorphism, it follows that $f_1$ and $f_3$ are algebraically independent irreducible polynomials.  Thus, $(f_1,f_3)$ is a counter-example of Conjecture \ref{Yu}.
\end{proof}

\section{multidegree $(d_1,d_2,p_3)$ with $p_3$ a prime number}

In this section, let $3\leq d_1\leq d_2\leq p_3$ be integers with $\gcd(d_1,d_2)=1$ and $p_3$ a prime number.

\begin{lemma}\label{elementary reduction of p3}
If $(d_1,d_2,p_3)\in \mdeg(\Tame~k^3)$ with $\gcd(d_1,d_2)=1$ and $p_3$ a prime number, then there exists a tame automorphism with multidegree $(d_1,d_2,p_3)$ which admits an elementary reduction.
\end{lemma}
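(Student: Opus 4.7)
The plan is to mirror the proof of Lemma \ref{elementary reduction}. Let $F$ be a tame automorphism with $\mdeg F = (d_1, d_2, p_3)$. By Theorem \ref{SUtheorem} together with Remark \ref{reductionIV}, $F$ admits either an elementary reduction or a reduction of type I, II, or III; and for reductions of type I or II, the appeal to \cite[Proposition 20]{Karas11e} that was already used in the proof of Lemma \ref{elementary reduction} produces, without changing the multidegree, a tame automorphism that admits an elementary reduction. So the only real work is to rule out type III under the hypotheses $\gcd(d_1, d_2) = 1$ and $p_3$ prime.

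For type III reductions, \cite[Definition 3]{SU04T} gives, after a suitable permutation of the components, one of the two numerical patterns
\begin{align*}
\text{(i)}\quad & n < d_1 \leq \tfrac{3}{2}n, \quad d_2 = 2n, \quad p_3 = 3n, \\
\text{(ii)}\quad & d_1 = \tfrac{3}{2}n, \quad d_2 = 2n, \quad \tfrac{5n}{2} < p_3 \leq 3n
\end{align*}
for some positive integer $n$. In case (i), primality of $p_3 = 3n$ forces $n = 1$, hence $d_2 = 2$, contradicting $d_2 \geq 3$. In case (ii), integrality of $d_1 = 3n/2$ forces $n$ to be even, say $n = 2k$, so that $d_1 = 3k$, $d_2 = 4k$ and $\gcd(d_1, d_2) = k$. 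The coprimality hypothesis then gives $k = 1$, so $d_1 = 3$, $d_2 = 4$, and the range $5 < p_3 \leq 6$ forces $p_3 = 6$, which is not prime, a contradiction.

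I expect the type III case analysis to be the only real step, with the mild subtlety that the two subcases require two distinct hypotheses: primality of $p_3$ disposes of subcase (i), while coprimality of $d_1$ and $d_2$ disposes of subcase (ii). Neither assumption alone would kill both subcases, so both hypotheses of the lemma are used essentially, and no further ingredient beyond the setup already developed in Section 3 is needed.
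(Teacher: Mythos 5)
Your proof is correct and follows essentially the same route as the paper: reduce to ruling out type III via \cite[Definition 3]{SU04T}, kill subcase (i) by primality of $p_3=3n$ and subcase (ii) by integrality of $3n/2$ plus $\gcd(d_1,d_2)=1$ forcing $n=2$ and $p_3=6$. The only cosmetic difference is that in subcase (i) you derive the contradiction from $d_2=2<3$ rather than from $p_3=3$ not exceeding $3$; both are valid.
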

\begin{proof}
Let $F$ be a tame automorphism with $\mdeg~F=(d_1,d_2,p_3)$. By Theorem \ref{SUtheorem} and Remark \ref{reductionIV}, $F$ admits an elementary reduction or a reduction of types I-III.

If $F$  admits a reduction of type III, then after a permutation, by \cite[Definition 3]{SU04T} there exists $n\in \mathbb{N}$ such that
\begin{align*}
&(4.1)\quad n<d_1\leq\frac32n,\ d_2=2n,\ p_3=3n;\quad \text{or}\\
&(4.2)\quad d_1=\frac32n,\ d_2=2n,\
\frac{5n}{2}<p_3\leq3n.
\end{align*}
Since $p_3$ is a prime number greater that $3$, (4.1) can not be satisfied.
If $(d_1,d_2,p_3)$ satisfies (4.2), it follows from $\gcd(d_1,d_2)=1$  that $n=2$, and hence $5<p_3\leq6$, contrary to the fact that $p_3$ is a prime number. Thus, $F$ admits no reduction of type III.

By \cite[Proposition 20]{Karas11e}, if $F$ admits a reduction of type I or II, then there exists a tame automorphism with the same multidegree that admits an elementary reduction.
\end{proof}

We can now formulate our main result in this section.

\begin{theorem}\label{main of p3}
Let $3\leq d_1\leq d_2\leq p_3$ be integers with $\gcd(d_1,d_2)=1$ and $p_3$ a prime number. Then $(d_1,d_2,p_3)\in
\mdeg(\Tame~k^3)$ if and only if $p_3\in d_1\mathbb{N}+d_2\mathbb{N}$.
\end{theorem}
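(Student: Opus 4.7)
The plan is to follow the same template as the proof of Theorem \ref{main of p2}, exploiting the fact that Lemma \ref{elementary reduction of p3} lets us assume the tame automorphism $F=(F_1,F_2,F_3)$ with $\mdeg~F=(d_1,d_2,p_3)$ admits an elementary reduction, and then ruling out each of the three possible shapes of elementary reduction by the Shestakov--Umirbaev inequality (Theorem \ref{inequality}, Remark \ref{inequality2}).

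For the ``if'' direction, one appeals to \cite[Proposition 2.2]{Karas11a} exactly as in Theorem \ref{main of p2}: when $p_3 \in d_1\mathbb{N}+d_2\mathbb{N}$ there is an explicit tame map of multidegree $(d_1,d_2,p_3)$. So assume $p_3 \notin d_1\mathbb{N}+d_2\mathbb{N}$; since $\gcd(d_1,d_2)=1$, Lemma \ref{ineq} gives $p_3<(d_1-1)(d_2-1)$. Note also that $p_3 \neq d_1$ and $p_3 \neq d_2$, since otherwise $p_3\in d_1\mathbb{N}+d_2\mathbb{N}$; hence $d_1<p_3$, $d_2<p_3$, and (since $p_3$ is prime) $\gcd(d_1,p_3)=\gcd(d_2,p_3)=1$. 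Also, the algebraic independence of the $F_i$ forces $\deg[F_i,F_j]\geq 2$, and the obvious divisibility arguments show $\bar F_i\notin\langle\bar F_j\rangle$ in every case needed below.

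Now suppose $F$ admits an elementary reduction; there are three cases. (Case 1) If $(F_1,F_2,F_3-g(F_1,F_2))$ reduces the third coordinate, then with $p=d_1/\gcd(d_1,d_2)=d_1$ and $\deg_y g=d_1 q+r$, Theorem \ref{inequality} gives
\[
p_3=\deg g(F_1,F_2)\geq q(d_1 d_2-d_1-d_2+2)+rd_2.
\]
Since $p_3<(d_1-1)(d_2-1)+1$ we force $q=0$; then expanding $g=\sum_{i=0}^{d_1-1}g_i(x)y^i$ and using disjointness of the residue classes $d_1\mathbb{N}+id_2$ modulo $d_1$ (which holds because $\gcd(d_1,d_2)=1$), we conclude $p_3\in d_1\mathbb{N}+d_2\mathbb{N}$, a contradiction. (Case 2) If $(F_1-g(F_2,F_3),F_2,F_3)$ reduces the first coordinate, then $p=d_2/\gcd(d_2,p_3)=d_2$ and a similar estimate $d_1\geq q(d_2 p_3-d_2-p_3+2)+rp_3$ forces $q=0$ (since $(d_2-1)(p_3-1)+1>d_2\geq d_1$) and then $r=0$ (since $d_1<p_3$), giving $d_1\in d_2\mathbb{N}$, which is impossible under $\gcd(d_1,d_2)=1$ and $d_1\geq 3$. (Case 3) If $(F_1,F_2-g(F_1,F_3),F_3)$ reduces the second coordinate, then $p=d_1/\gcd(d_1,p_3)=d_1$ and $d_2\geq q(d_1 p_3-d_1-p_3+2)+rp_3$ again forces $q=r=0$ (using $d_2<p_3$ and $(d_1-1)(p_3-1)+1>p_3\geq d_2$), whence $d_2\in d_1\mathbb{N}$, again contradicting $\gcd(d_1,d_2)=1$ and $d_1\geq 3$.

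The only step where anything could go wrong compared to Theorem \ref{main of p2} is Case 3, because there one needs $p=d_1/\gcd(d_1,d_3)\geq 3$; in the previous theorem this was secured by the extra hypothesis $d_1/\gcd(d_1,d_3)\neq 2$, but here the hypothesis that $p_3$ is prime (together with $d_1<p_3$) automatically gives $\gcd(d_1,p_3)=1$, so $p=d_1\geq 3$ for free. So no extra technical assumption is needed, and the three cases uniformly lead to a contradiction, completing the necessity direction.
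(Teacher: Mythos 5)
Your proposal is correct and follows essentially the same route as the paper's proof: use Lemma \ref{elementary reduction of p3} to reduce to an automorphism admitting an elementary reduction, then rule out the three shapes of reduction with the Shestakov--Umirbaev estimate, the only difference being the order of the cases and that you make explicit the observations ($d_1,d_2<p_3$, $\gcd(d_1,p_3)=\gcd(d_2,p_3)=1$, hence $p=d_1\geq 3$ with no extra hypothesis) that the paper uses implicitly. No gaps.
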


\begin{proof}
(1) If $p_3\in d_1\mathbb{N}+d_2\mathbb{N}$, by \cite[Proposition 2.2]{Karas11a}, $(d_1,d_2,p_3)\in \mdeg(\Tame~k^3)$.

(2) Now suppose that $p_3\notin d_1\mathbb{N}+d_2\mathbb{N}$, whence  $p_3<(d_1-1)(d_2-1)$ by Lemma \ref{ineq}.  If there exists a tame automorphism with multidegree $(d_1,d_2,p_3)$, then by Lemma \ref{elementary reduction of p3}, there exists a tame automorphism $F=(F_1,F_2,F_3)$  with $\mdeg~F=(d_1,d_2,p_3)$ admitting an elementary reduction.
Now the proof will be divided into three cases.

\textbf{Case 1:} If $F$ admits an elementary reduction of the form
$(F_1-g(F_2,F_3),F_2,F_3)$, then $\deg F_1=\deg g(F_2,F_3)$. $p=\frac{\deg F_2}{\gcd(\deg F_2,\deg F_3)}=d_2$. Set $\deg_y g(x,y)=d_2q+r,\ 0\leq r<d_2$. Then
\begin{align*}
d_1&=\deg F_1=\deg g(F_2,F_3)\\
   &\geq q(d_2p_3-d_2-p_3+\deg[F_2,F_3])+rp_3\\
   &\geq q(3p_3-d_2-p_3+2)+rp_3\\
   &\geq q((p_3-d_2)+p_3+2)+rp_3.
   \end{align*}
Thus, $q=r=0$.
Suppose that $g(F_2,F_3)=g_1(F_2)$. Then $d_1=\deg F_1=\deg g_1(F_2)\in
d_2\mathbb{N}$, contrary to $d_1<d_2$.

\textbf{Case 2:}  If $F$ admits an elementary reduction of the form
$(F_1,F_2-g(F_1,F_3),F_3)$, then $\deg F_2=\deg g(F_1,F_3)$.
$p=\frac{\deg F_1}{\gcd(\deg F_1,\deg F_3)}=d_1$. Set $\deg_y
g(x,y)=d_1q+r,\ 0\leq r<d_1$. Then
\begin{align*}
     d_2&=\deg F_2=\deg g(F_1,F_3)\\
        &\geq q(d_1p_3-d_1-p_3+\deg[F_1,F_3])+rp_3\\
        &\geq q(3p_3-d_1-p_3+2)+rp_3\\
        &=q((p_3-d_1)+p_3+2)+rp_3.
    \end{align*}
Thus, $q=r=0$.
Suppose that $g(F_1,F_3)=g_1(F_1)$. Then $d_2=\deg F_2=\deg g_1(F_1)\in
d_1\mathbb{N}$, a contradiction.

\textbf{Case 3:} If $F$ admits an elementary reduction of the form
$(F_1,F_2,F_3-g(F_1,F_2))$ such that $\deg(F_3-g(F_1,F_2))<\deg F_3$, then $\deg F_3=\deg g(F_1,F_2)$. It follows from $\gcd(d_1,d_2)=1$ that $p=\frac{\deg F_1}{\gcd(\deg F_1,\deg F_2)}=d_1$. Set $\deg_y g(x,y)=d_1q+r,\ 0\leq r<d_1$. Then
\begin{align*}
p_3&=\deg F_3=\deg g(F_1,F_2)\\
   &\geq q(d_1d_2-d_1-d_2+\deg[F_1,F_2])+rd_2\\
   &\geq q(d_1d_2-d_1-d_2+2)+rd_2.
\end{align*}
Since $p_3<(d_1-1)(d_2-1)$, we have $q=0$.
Note that $0\leq r<d_1$, we can suppose that $g(x,y)=g_0(x)+g_1(x)y+\cdots+g_{d_1-1}(x)y^{d_1-1}$.
It follows from $\gcd(d_1,d_2)=1$ that the sets $d_1\mathbb{N},~ d_1\mathbb{N}+d_2,~\dots,~d_1\mathbb{N}+(d_1-1)d_2$ are disjoint. Thus,
\begin{align*}
p_3&=\deg g(F_1,F_2)=\deg(g_0(F_1)+g_1(F_1)F_2+\cdots+g_{d_1-1}(F_1)F_2^{d_1-1})\\
   &=\max_{0\leq i\leq d_1-1}\{\deg F_1\deg g_i+i\deg F_2\}=\max_{0\leq i\leq d_1-1}\{d_1\deg g_i+id_2\},
\end{align*}
which contradicts $p_3\notin d_1\mathbb{N}+d_2\mathbb{N}$.

Thus, $F$ admits no elementary reduction, the contradiction implies that there exists no tame automorphism with multidegree $(d_1,d_2,p_3)$ if  $p_3\notin d_1\mathbb{N}+d_2\mathbb{N}$.
\end{proof}

\bibliographystyle{amsplain}

\begin{thebibliography}{10}

\bibitem{Abhyankar08a}
S.~S. Abhyankar, \emph{Some thoughts on the {J}acobian conjecture. {I}},
  J. Algebra \textbf{319} (2008), no.~2, 493--548.

\bibitem{Brauer}
A. Brauer, \emph{On a problem of partitions}, Amer. J. Math. \textbf{64}
  (1942), 299--312. 

\bibitem{DrenskyY09}
V. Drensky, J.-T. Yu, \emph{Degree estimate for commutators}, J.
  Algebra \textbf{322} (2009), no.~7, 2321--2334.

\bibitem{Jung}
H. W.~E. Jung, \emph{\"{U}ber ganze birationale {T}ransformationen der
  {E}bene}, J. Reine Angew. Math. \textbf{184} (1942), 161--174.

\bibitem{Karas10a}
M. Kara{\'s}, \emph{Tame automorphisms of {$\mathbb{C}^3$} with multidegree of
  the form {$(3,d_2,d_3)$}}, J. Pure Appl. Algebra \textbf{214} (2010), no.~12,
  2144--2147. 

\bibitem{Karas11e}
M. Kara{\'s}, \emph{Multidegrees of tame automorphisms of {$\mathbb{C}^n$}},
  Dissertationes Math. (Rozprawy Mat.) \textbf{477} (2011), 55pp. 

\bibitem{Karas11c}
M. Kara{\'s}, \emph{Tame automorphisms of {$\mathbb{C}^3$} with multidegree of the form
  {$(p_1,p_2,d_3)$}}, Bull. Pol. Acad. Sci. Math. \textbf{59} (2011), no.~1,
  27--32. 

\bibitem{Karas11a}
M. Kara{\'s}, \emph{There is no tame automorphism of {$\mathbb{C}^3$} with multidegree
  {$(3,4,5)$}}, Proc. Amer. Math. Soc. \textbf{139} (2011), no.~3, 769--775.

\bibitem{KarasZ11}
M. Kara{\'s}, J. Zygad{\l}o, \emph{On multidegrees of tame and wild
  automorphisms of {$\mathbb{C}^3$}}, J. Pure Appl. Algebra \textbf{215} (2011),
  no.~12, 2843--2846. 

\bibitem{Kuroda09}
S. Kuroda, \emph{Automorphisms of a polynomial ring which admit reductions
  of type {I}}, Publ. Res. Inst. Math. Sci. \textbf{45} (2009), no.~3,
  907--917. 

\bibitem{Kuroda10}
S. Kuroda, \emph{Shestakov-{U}mirbaev reductions and {N}agata's conjecture on a
  polynomial automorphism}, Tohoku Math. J. (2) \textbf{62} (2010), no.~1,
  75--115. 

\bibitem{SU04P}
I.~P. Shestakov, U.~U. Umirbaev, \emph{Poisson brackets and
  two-generated subalgebras of rings of polynomials}, J. Amer. Math. Soc.
  \textbf{17} (2004), no.~1, 181--196. 

\bibitem{SU04T}
I.~P. Shestakov, U.~U. Umirbaev, \emph{The tame and the wild automorphisms of polynomial rings in three
  variables}, J. Amer. Math. Soc. \textbf{17} (2004), no.~1, 197--227.

\bibitem{SunC}
X.~Sun, Y.~Chen, \emph{Multidegrees of tame automorphisms in dimension
  three}, Publ. Res. Inst. Math. Sci \textbf{48} (2012), no.~1, 129--137.

\bibitem{Kulk}
W.~van~der Kulk, \emph{On polynomial rings in two variables}, Nieuw Arch.
  Wiskunde (3) \textbf{1} (1953), 33--41. 

\end{thebibliography}

\end{document}